\documentclass[a4paper, 12pt]{article}
\usepackage{tgheros}
\usepackage{amsmath, amsthm, latexsym, amssymb, amsfonts, epsf, xcolor, dsfont}
\newtheorem{conj}{Conjecture}

\newtheorem{theorem}{Theorem}[section]
\newtheorem{lemma}[theorem]{Lemma}

\newtheorem{corollary}[theorem]{Corollary}

\newtheorem*{theorem*}{Theorem}

\theoremstyle{definition}

\usepackage{mathtools} 
\DeclarePairedDelimiter\abs{\lvert}{\rvert}%
\DeclarePairedDelimiter\norm{\lVert}{\rVert}%

\makeatletter
\let\oldabs\abs
\def\abs{\@ifstar{\oldabs}{\oldabs*}}
\let\oldnorm\norm
\def\norm{\@ifstar{\oldnorm}{\oldnorm*}}
\makeatother

\DeclareMathOperator{\F}{\mathbb{F}}

\newcommand{\PP}{\mathbb{P}}

\newcommand{\Fq}{\F_q}

\begin{document}
\baselineskip=16.3pt
\parskip=14pt

\begin{center}
\section*{Proof of a Conjecture on Primitive Quartic Polynomials over Finite Fields}

{\large 
Rod Gow and Gary McGuire
 \\ { \ }\\
School of Mathematics and Statistics\\
University College Dublin\\
Ireland}
\end{center}

 \subsection*{Abstract}
 
We present a proof of a conjecture we made about primitive polynomials,
 for $q$ sufficiently large.
The argument follows a standard route, however a deeper result than usual with a character sum
bound is required. This is a theorem of Fu and Wan.
The proof is  AI assisted.
 
 MSC 11T06
 
 Keywords primitive polynomial
 
 \newpage

\section{Introduction}

Let $q$ be a prime power. Let $\alpha \in \mathbb{F}_{q^2} \setminus \mathbb{F}_q$ be fixed.

For each $\lambda \in \F_q$ let $y_\lambda $ be one of the two elements of $\mathbb{F}_{q^4}$
satisfying  $$y_\lambda^2+y_\lambda+\lambda=\alpha.$$
Then $y_\lambda $ will lie in $\mathbb{F}_{q^4}$, and possibly in $\mathbb{F}_{q^2}$.
Indeed, the discriminant of the quadratic is
$ 1 - 4( \lambda - \alpha) \in \mathbb{F}_{q^2}$, so $y_\lambda \in \mathbb{F}_{q^4} \setminus \mathbb{F}_{q^2}$  if and only if 
$ 1 - 4\lambda +4\alpha$ is a nonsquare in  $\mathbb{F}_{q^2}$.
It is not hard to show that $ 1 - 4\lambda +4\alpha$ is a nonsquare in  $\mathbb{F}_{q^2}$ for $(q+1)/2$ values of $\lambda$.
In a recent paper \cite{GM} we conjectured that  $y_\lambda $ is a primitive element of $\mathbb{F}_{q^4}$
for one of these $(q+1)/2$ values of $\lambda$. Here is an equivalent statement of Conjecture 1 in \cite{GM}.

\begin{conj}\label{oddconj2}
Let $q\not=13$ be any odd prime power.
Let $\alpha \in \mathbb{F}_{q^2} \setminus \mathbb{F}_q$ be fixed.
Then there exists $\lambda \in \F_q$ such that
\[
x^2+ x+\lambda - \alpha
\]
is a primitive quadratic polynomial in $\F_{q^2}[x]$.
\end{conj}

In this paper we present a proof of Conjecture \ref{oddconj2},
 for $q$ sufficiently large. To be precise, we prove that there exists a constant $q_0$ such that
 Conjecture \ref{oddconj2} is true for all odd prime powers $q>q_0$. We do not attempt to find $q_0$.
 We were assisted by AI, as described in the AI declaration at the end of the article.

A proof of Conjecture \ref{oddconj2} by Zhou and Wu has been posted on the arXiv recently \cite{ZW}.
The authors did not use AI.
The proof is for all $q$, not just for all $q$ sufficiently large.

In section 2 we outline the first steps of the proof, which is a standard method for proving the existence of  primitive elements in certain subsets
of $\F_{q^n}$.
The normal route is to bound the character sum over the subset, and then use estimates of $\omega(q^n-1)$ to finish. We prove the (crude) estimate we need in section 3. In section 4 we present the remainder of the proof, assuming the character sum bound. Finally, in section 5 we prove the character sum bound. This uses a deep result of Fu and Wan \cite{FW}.

With better estimates of $\omega(q^n-1)$ one could lower the implied constant $q_0$. We make no attempts to estimate
or improve $q_0$ in this article.

\section{First steps}

Let $y\in \F_{q^4}$.
Define the indicator function of primitive elements by
$$\mathds{1}(y)=\begin{cases}
        1 & \text{if $y$ is a primitive element,}\\
        0 & \text{otherwise}.
    \end{cases}$$

From a result of Vinogradov (see exercise 5.14 in \cite{LN}) we have

$$\mathds{1}(y)=\frac{\varphi(q^4-1)}{q^4-1} \sum_{d \mid (q^4 - 1)} \frac{\mu(d)}{\varphi(d)} \sum_{\chi : \chi^d=1 } \chi(y)
    $$

where $\mu$ is the Möbius function, $\varphi$ is Euler's totient function, and $\chi$ is a multiplicative character of $\mathbb{F}_{q^4}^*$.

For any subset $M$ of $\F_{q^4}$ it is clear that
$$\sum_{y\in M} \mathds{1}(y) = \text{number of primitive elements in $M$}.$$

We let $$M=\bigcup_{\lambda \in \F_q} \text{(roots of $x^2+x+\lambda - \alpha$)}\subseteq \F_{q^4}$$
(so $|M|=2q$) and by an abuse of notation we define
$$M(\chi)
=\sum_{y\in M} \chi(y).$$
Then
\[
N:= \text{number of primitive elements in $M$}=
\frac{\varphi(q^4-1)}{q^4-1} \sum_{d \mid (q^4 - 1)} \frac{\mu(d)}{\varphi(d)} \sum_{\chi : \chi^d=1 } M (\chi).
\]
We want to prove that  $N>0$.

For any subset $S\subseteq M$ we define 
$$S(\chi)
=\sum_{y\in S} \chi(y)$$
and define
\begin{equation}\label{ns}
N_S:=\frac{\varphi(q^4-1)}{q^4-1} \sum_{d \mid (q^4 - 1)} \frac{\mu(d)}{\varphi(d)} \sum_{\chi : \chi^d=1 } S (\chi).
\end{equation}
Then $S$ contains a primitive element if and only if $N_S>0$. Clearly if $N_S>0$ then $N>0$.
We will choose a particular subset $S$ to prove that $N_S>0$, and to thereby prove Conjecture \ref{oddconj2}, for all $q$ sufficiently large.

\section{Estimate of $\omega(n)$}

Let $\omega(n)$ be the number of distinct prime factors of $n$.
We will prove an inequality that we need later in the proof.

\begin{lemma}\label{omega}
Let $c>1$ be a real constant. Then
\[
n>c\left( 2^{\omega(n^8 - 1)}-1 \right)
\]
for all $n$ sufficiently large. 
\end{lemma}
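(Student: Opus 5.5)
The plan is to use the standard bound $\omega(m) = O(\log m/\log\log m)$, which gives $2^{\omega(m)} = m^{o(1)}$, and apply it with $m = n^8-1$.

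\textbf{Step 1: the basic estimate.} I will show that for every $\varepsilon>0$ we have $2^{\omega(m)} \le C_\varepsilon m^{\varepsilon}$ for all $m\ge 1$. Take $\varepsilon = 1/16$. Write $m=\prod p_i^{a_i}$. Then
\[
\frac{2^{\omega(m)}}{m^{\varepsilon}} \le \prod_{p\mid m} \frac{2}{p^{\varepsilon}}.
\]
Each factor with $p \ge 2^{1/\varepsilon}$ is at most $1$. The primes $p< 2^{1/\varepsilon}=2^{16}$ form a finite set, and for each of them the factor is at most $2$. Hence
\[
2^{\omega(m)} \le C\, m^{1/16}, \qquad C = 2^{\pi(2^{16})},
\]
where $C$ is an absolute constant.

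\textbf{Step 2: apply it to $m=n^8-1$.} This gives
\[
c\left(2^{\omega(n^8-1)}-1\right) < c\,C\,(n^8-1)^{1/16} < cC\, n^{1/2}.
\]
Since $cC n^{1/2} < n$ as soon as $n > (cC)^2$, the inequality of Lemma \ref{omega} holds for all $n > (cC)^2$. The argument works for any $c>0$, so the hypothesis $c>1$ is not needed.

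\textbf{Main obstacle.} There is essentially none. The only point requiring care is the bound $2^{\omega(m)} \ll_\varepsilon m^{\varepsilon}$ in Step 1: the exponent must be chosen smaller than $1/8$ so that the degree $8$ is absorbed and the power of $n$ stays below $1$. The product-over-primes argument above handles this, and no finer estimate (such as Robin's explicit bound on $\omega$) is needed, because the lemma does not ask for an explicit threshold.
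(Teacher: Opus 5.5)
Your proof is correct, but it reaches the key intermediate bound $2^{\omega(n^8-1)} \ll n^{1/2}$ through a different input than the paper uses.

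The paper cites the maximal-order estimate $\omega(m) < 2\log m/\log\log m$ for large $m$ (from Tenenbaum). It turns this into $2^{\omega(n^8-1)} < n^{16\log 2/\log\log n}$ and then takes $n$ large enough that the exponent falls below $1/2$. You instead prove from scratch that $2^{\omega(m)} \le 2^{\pi(2^{16})}\, m^{1/16}$ for every $m \ge 1$, by comparing $2$ with $p^{1/16}$ prime by prime.

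Your route has three advantages:
\begin{itemize}
\item It is fully elementary and self-contained.
\item It gives an explicit, if astronomically large, threshold $n > (cC)^2$. The paper's threshold is inherited, unquantified, from the cited theorem.
\item Your bound is stated purely in terms of $m$, so it applies verbatim to $m = q^4-1$ in Section 4, where $n=\sqrt q$ need not be an integer.
\end{itemize}

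The paper's input is much sharper: $2^{\omega(m)} = m^{O(1/\log\log m)}$ rather than $O_\varepsilon(m^{\varepsilon})$. Even so, for explicit work the standard tool in this area is a refinement of your own argument, namely $2^{\omega(m)} \le \bigl(\prod_{p<2^{t}} 2p^{-1/t}\bigr)\, m^{1/t}$.

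You are also right that the hypothesis $c>1$ is unnecessary; the paper's proof does not use it either.
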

In fact,  stronger statements are known to be true, so we can prove what we need with elementary estimates.

\begin{proof}
We use a well known theorem (\cite{T}
Chapter I.5, Section 5.3) which states that
\[ \omega(m) < 2 \frac{\log m}{\log \log m} \]
for all $m$ sufficiently large (in fact \cite{T} has $1+o(1)$ instead of 2).

Replacing $m$ with $n^8-1$, and replacing $\log (n^8 -1)$ with $\log (n^8)$, 
and replacing $\log \log (n^8 -1)$ with $\log \log n$, we get
\[
\omega(n^8-1) < 16 \frac{\log n}{\log \log n}.
\] 
Then 
\[
2^{\omega(n^8-1)} < 2^{ \frac{16\log n}{\log \log n}}=e^{ \frac{16\log n \log 2}{\log \log n}}
=n^{ \frac{16 \log 2}{\log \log n}}.
\]
The exponent $\frac{16 \log 2}{\log \log n}$ goes to 0 as $n\rightarrow \infty$,
so $\frac{16 \log 2}{\log \log n}<1/2$ for all $n$ sufficiently large.
(The choice of $1/2$ could be any number between 0 and 1.)
Therefore, for sufficiently large $n$, we may conclude
\[
2^{\omega(n^8-1)} <n^{1/2}.
\]
Multiplying by $c$ gives $c\cdot 2^{\omega(n^8-1)} <cn^{1/2}$ and the simple inequality
$cn^{1/2}<n+c$ (for all $n$ sufficiently large) completes the proof.
\end{proof}

\section{Proof of Conjecture Assuming Character Sum Bound}

Recall that $S$ is a nonempty subset of $M$, to be chosen later.
In this section we present a proof that $N_S>0$, assuming a bound on character sums. 
We will then prove this character sum bound in the next section.

In the expression \eqref{ns} for $N_S$ we separate out the term with $d=1$ (which is equal to $q$) from the other terms, to get

$$N_S = \frac{\varphi(q^4-1)}{q^4-1} \biggl( q + E \biggr)$$
where
\begin{equation}\label{e1}
E =  \sum_{d \mid (q^4-1), d > 1} \frac{\mu(d)}{\varphi(d)} \sum_{\chi : \chi^d=1 } S(\chi).
\end{equation}

To prove that $N_S>0$ it suffices to prove that $|E|<q$.

Taking absolute values in \eqref{e1} and using the triangle inequality gives
\begin{equation}\label{e2}
\left\vert{} E \right\vert{} \leq \sum_{d \mid (q^4-1), d > 1} \frac{\vert{}\mu(d)\vert{}}{\varphi(d)} \sum_{\chi : \chi^d=1} |S(\chi)|.
\end{equation}

In section \ref{bdpf}  we will prove  in Corollary \ref{finalc} that, for a certain subset
$S_1 \subseteq M$, there exists a constant c such that 
\begin{equation}\label{char}
\left\vert{} S_1(\chi) \right\vert{} \le c\sqrt{q}.
\end{equation}

Assuming this is true, applying this character sum bound to \eqref{e2} gives

$$\left\vert{} E \right\vert{} \leq \sum_{d \mid (q^4-1), d > 1} \frac{\vert{}\mu(d)\vert{}}{\varphi(d)} \sum_{\chi : \chi^d=1} c\sqrt{q}.$$
Since there are exactly $\varphi(d)$ characters of order $d$, we get
$$\left\vert{} E \right\vert{} \leq \sum_{\substack{d \mid (q^4-1), d > 1}} \vert{}\mu(d)\vert{} c\sqrt{q}.$$

The terms with $\mu(d)=0$ can obviously be ignored --- they are the non-squarefree divisors.
So we restrict ourselves to squarefree divisors. We then have
\begin{equation}\label{e3}
\left\vert{} E \right\vert{} \leq \sum_{\substack{d \mid (q^4-1) \\ \mu(d) \neq 0, d > 1}} c\sqrt{q}.
\end{equation}

Let $\omega(m)$ be the number of distinct prime factors of $m$. This is a standard notation for a well known function in number theory. The number of squarefree divisors of $m$ is $2^{\omega(m)}$.
Putting $m=q^4-1$,
the total number of square-free divisors of $q^4 - 1$ is $2^{\omega(q^4-1)}$. Subtracting the $d=1$ case, there are $2^{\omega(q^4-1)} - 1$ divisors $d>1$ of $q^4-1$ with $\mu (d) \neq 0$. 
This is the number of terms in the sum \eqref{e3}.
So we have
$$\left\vert{} E \right\vert{} \leq  \left(2^{\omega(q^4-1)} - 1\right)\ c\sqrt{q}.$$

For $N_S$ to be strictly positive, we need $q > \vert{} E \vert{}$, so we are done if 
\[
q > \left( 2^{\omega(q^4-1)} - 1 \right) c\sqrt{q}.
\]
Therefore we have proved that 
$$N_S > 0 \quad \text{whenever} \quad \sqrt{q} > c\left( 2^{\omega(q^4 - 1)}-1 \right).$$
The inequality $\sqrt{q} > c\left( 2^{\omega(q^4 - 1)}-1 \right)$ is true for all $q$ sufficiently large, by Lemma \ref{omega}.
Therefore $N_S>0$ for all $q$ sufficiently large.
Conjecture \ref{oddconj2}  is now proved, for all $q$ sufficiently large, under the assumption that \eqref{char} is true.

In the last section we prove that \eqref{char} is true.

\section{Proof of Character Sum Bound}\label{bdpf}

First recall that
$$M(\chi)=\sum_{\lambda \in \F_q}\quad \sum_{\text{$\gamma$ root of $x^2+x+\lambda - \alpha$}} \chi(\gamma)
=\sum_{y\in M} \chi(y)$$
where $$M=\bigcup_{\lambda \in \F_q} \text{(roots of $x^2+x+\lambda - \alpha$)}.$$
Recall that 
 $\alpha \in \mathbb{F}_{q^2} \setminus \mathbb{F}_q$ is fixed, and
the objective is to prove that some subset $S\subseteq M$ contains a primitive element of $\F_{q^4}$.

Completing the square we write
\[
x^2+x+\lambda - \alpha=(x+\frac{1}{2})^2+\lambda - \alpha-\frac{1}{4}=z^2+\lambda-c
\]
where $z=x+\frac{1}{2}$ and $c=\alpha+\frac{1}{4}$.
Let 
$$T=\bigcup_{\lambda \in \F_q} \text{(roots of $z^2+\lambda - c$)}.$$
Obviously $T=M+\frac{1}{2}$, and both sets have $2q$ elements, and 0 is not in $M$ or $T$.
Of course, $M$ containing a primitive element and $T$ containing a primitive element are not the same thing.
However some calculations are easier with $T$. 

We now use the fact that elements of $T$ are elements of $\F_{q^4}$ whose square is in $\F_{q^2}$.
Let $d$ be a fixed nonsquare in $\F_{q^2}$, so that $\F_{q^4}=\F_{q^2}(\sqrt{d})$. 
Any $z\in \F_{q^4}$ can be written uniquely as $z=u+v\sqrt{d}$, where $u,v\in \F_{q^2}$.
Then $z^2=u^2+dv^2+2uv\sqrt{d}$.

Let $z=u+v\sqrt{d}\in T$, so then $z^2 \in \F_{q^2}$, so $2uv=0$.
So there are two types of elements of $T$, either 
\begin{enumerate}
\item $v=0$ and $z^2=u^2$, or
\item $u=0$ and $z^2=dv^2$.
\end{enumerate}
(The first case $v=0$ will eventually be discarded, because in this case  $z\in \F_{q^2}$ so $z$ and $z+\frac{1}{2}$ cannot possibly be primitive elements in $\F_{q^4}$.)

Any element of $\F_{q^2}$ can be written uniquely as $a+b\alpha$ for some $a,b\in \F_q$.
Let $\ell$ be the $\F_q$-linear functional on $\F_{q^2}$ defined by $\ell (a+b\alpha)=b$.
Note that $\ell$ vanishes on $\F_q$, and $\ell (\alpha)=1$, and $\ell(c)=\ell(\alpha+\frac{1}{4})=
\ell (\alpha)+\ell (\frac{1}{4})= 1+0=1$.

Let $z\in T$, so $z^2=c-\lambda $ for some $\lambda\in \F_q$.
Then $\ell(z^2)=\ell(c)-\ell(\lambda)=1-0=1$.
Therefore we may re-write our two types of elements of $T$ as
\begin{enumerate}
\item $T\cap \F_{q^2}$, elements $z=\pm u\in \F_{q^2}$ with  $\ell(u^2)=1$.
\item $T\cap \sqrt{d} \F_{q^2}$, elements $z=\pm \sqrt{d} v\in \F_{q^4}$ with  $\ell(dv^2)=1$.
\end{enumerate}

Considering $\F_{q^2}$ as a 2-dimensional vector space over $\F_q$, the functions
$u\mapsto \ell(u^2)$ and $v\mapsto \ell(dv^2)$ are non-degenerate quadratic forms.
At this point we discard the set $T\cap \F_{q^2}$, as mentioned earlier, because it can never contain a primitive element. 

The quadratic form $v\mapsto \ell(dv^2)$ is anisotropic, and therefore takes every nonzero value $q+1$ times.
In particular, this holds for the value 1, so $|T\cap \sqrt{d} \F_{q^2}|=q+1$.

(To see that $v\mapsto \ell(dv^2)$ is anisotropic, note that $\ell(dv^2)=0$ implies $dv^2\in \F_q$, which implies
that $d$ is a square in $\F_{q^2}$, a contradiction.)

This shows that $T\cap \sqrt{d} \F_{q^2}$ is a (non-degenerate smooth)  conic over $\F_q$. 
There is a standard method for finding a parametrization of a conic by rational functions, which works over any field. 
We will use this parametrization.

We move back to $M$ now, so let 
\begin{equation}\label{s1}
S_1=(T\cap \sqrt{d} \F_{q^2})-\frac{1}{2} \subseteq M.
\end{equation}

Then $|S_1|=q+1$. We will prove that $S_1$ contains a primitive element. 
We prove the following lemma, which is the rational parametrization that we need.

Let $C=\{y \in \F_{q^2} : \sqrt{d} y - \frac{1}{2} \in S_1\}$.

\begin{lemma}\label{param}
Continue the above notation.
There are polynomials $P\in \F_{q^4} [t]$ with $\deg P\le2$ and $D\in\Fq[t]$ with $D$ an irreducible
quadratic, and an element $P_{\infty}\in S_{1}$, such that
\[
S_{1}=\Bigl\{\,R(t):=\frac{P(t)}{D(t)}\ :\ t\in\Fq\Bigr\}\ \cup\ \{P_{\infty}\},
\]
the $q$ values $R(t)$, $t\in\Fq$, being pairwise distinct.  

Moreover $R(t)$ is a nonconstant
rational function whose pole divisor on $\PP^{1}({\overline{\F_q}})$ is either the sum of two
distinct simple poles or a single simple pole.
\end{lemma}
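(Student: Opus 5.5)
The plan is to parametrize the conic $C$ by projecting from one of its rational points, and then transport the parametrization to $S_1$ by the affine map $y\mapsto \sqrt{d}\,y-\frac12$.

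\textbf{Coordinates on $C$.} The set $C=\{v\in\F_{q^2}:\ell(dv^2)=1\}$ is the zero set of $Q(v)-1$, where $Q(v)=\ell(dv^2)$ is the anisotropic binary quadratic form over $\Fq$ from the previous discussion. We have $|C|=q+1$, so $C$ is nonempty. Fix a basis $1,\alpha$ and write $v=x+y\alpha$. Then $Q(v)=ax^2+bxy+ey^2$ with $a,b,e\in\Fq$, and $Q$ has no nontrivial zero over $\Fq$.

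\textbf{Projection from a point.} Pick $v_0\in C$. Every other point of $C$ lies on exactly one line through $v_0$ with $\Fq$-rational direction. Use the slope parametrization $v=v_0+s\,w(t)$, where $w(t)=1+t\alpha$ for $t\in\Fq$, together with the single extra direction $w=\alpha$, which corresponds to $t=\infty$.

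Substituting into $Q(v)=1$ gives
\[
s^2Q(w)+sB(v_0,w)=0,
\]
where $B$ is the associated bilinear form. Hence
\[
s=-\frac{B(v_0,w(t))}{Q(w(t))}.
\]
The denominator $D(t):=Q(1+t\alpha)$ is a quadratic in $t$ with leading coefficient $Q(\alpha)\neq0$. It is irreducible over $\Fq$ precisely because $Q$ is anisotropic. It also never vanishes on $\Fq$, so each $t\in\Fq$ yields a genuine point of $C$, and the direction $\alpha$ yields one more point.

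Each line through $v_0$ meets $C$ in exactly one further point, counted with tangency: if the line is tangent, that point is $v_0$ itself. So the map from the $q+1$ rational directions to $C$ is a bijection. This gives $q+1$ distinct points, matching $|C|=q+1$.

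\textbf{Transport to $S_1$.} Set
\[
R(t)=\sqrt{d}\,\bigl(v_0D(t)-B(v_0,w(t))\,w(t)\bigr)/D(t)-\tfrac12 .
\]
Its numerator $P(t)$ has degree at most $2$ and coefficients in $\F_{q^4}$. Take $P_\infty$ to be the image of the direction $\alpha$. Distinctness of the values $R(t)$ follows from the bijection above, since $y\mapsto\sqrt d\,y-\frac12$ is injective.

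\textbf{Poles.} The poles of $R$ can only lie at the two conjugate roots $\theta,\theta^q\in\F_{q^2}\setminus\Fq$ of $D$, which are distinct because $D$ is irreducible. $R$ has no pole at $\infty$, since $\deg P\le 2=\deg D$.

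$R$ is nonconstant, because it takes $q\ge 3$ distinct values. Therefore it has at least one pole. Each pole is simple, since $D$ has only simple roots.

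This leaves exactly two possibilities: both $\theta$ and $\theta^q$ are poles, giving two distinct simple poles, or the numerator cancels one of the factors, leaving a single simple pole. There is one subtlety here. The conjugation $t\mapsto t^q$ does not obviously preserve pole status, because $P$ has coefficients in $\F_{q^4}$. This is precisely why the lemma allows both cases.

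\textbf{Main obstacle.} The hardest step is verifying cleanly that $D$ is irreducible and that the correspondence is bijective, including the tangent line at $v_0$ and the point at $t=\infty$. I would handle it via anisotropy of $Q$ and the standard line-intersection count. The pole description then follows formally.
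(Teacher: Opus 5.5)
Your proposal is correct and follows the paper's proof essentially step for step. Both use projection from a rational point $v_0\in C$ along the directions $z(t)=z_1+tz_2$, with $z_2$ at infinity, and both get an irreducible denominator $D(t)=Q(z(t))\in\Fq[t]$ from anisotropy, transport by $y\mapsto\sqrt d\,y-\frac12$, and read off the poles from cancellation in $P/D$ together with $\deg P\le\deg D$. The one difference is that you get bijectivity by counting against the already-known $|C|=q+1$, leaving implicit that there is a unique tangent direction (true since $B(v_0,v_0)\neq 0$); the paper instead proves injectivity and surjectivity directly via nondegeneracy of $B$, and so re-derives $|C|=q+1$.
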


\begin{proof}
Let $Q: v\mapsto \ell(dv^2)$ be the anisotropic quadratic form on $\F_{q^2}$, as above.
Let $B$ be its polarization, the symmetric $\Fq$-bilinear form
\[
B:\F_{q^2}\times \F_{q^2}\longrightarrow\Fq,\qquad B(y,z):=\ell(d yz),
\]
 so that
\begin{equation}\label{eq:polar}
Q(y)=B(y,y)\quad\text{and}\quad
Q(y+sz)=Q(y)+2s\,B(y,z)+s^{2}Q(z)
\end{equation}
for all $y,z\in \F_{q^2},\ s\in\Fq$.
  Two facts will be used repeatedly. The first is that
\begin{equation}\label{eq:aniso}
Q(z)\neq0\quad\text{for every }z\in \F_{q^2}\setminus\{0\}\qquad(\text{i.e. $Q$ is anisotropic}).
\end{equation}
The second fact is that $B$ is nondegenerate, i.e.,
\begin{equation}\label{eq:nondeg}
\text{for } y\neq0 \text{ the linear form } z\mapsto B(y,z) \text{ on } \F_{q^2} \text{ is nonzero}.
\end{equation}

Let $C=\{y \in \F_{q^2} : \sqrt{d} y - \frac{1}{2} \in S_1\}$.

Since $C$ is nonempty we may fix $y_{0}\in C$, so
$Q(y_{0})=1$.  Note $y_{0}\neq0$, since $Q(0)=0\neq1$.

Let
\[
\PP^{1}(\Fq):=\bigl(\F_{q^2}\setminus\{0\}\bigr)\big/\Fq^{\times},
\]
be the set of $\Fq$-lines through $0$ in the $2$-dimensional $\Fq$-space $\F_{q^2}$.
 We write $[z]$ for
the line through $z\in \F_{q^2}$, $z\neq0$.

For $z\in \F_{q^2}$, $z\neq0$, put
\[
s_{z}:=-\,\frac{2\,B(y_{0},z)}{Q(z)}\in\Fq ,
\]
noting $Q(z)\not=0$ by \eqref{eq:aniso}.  We claim that
\[
\pi:\PP^{1}(\Fq)\longrightarrow C,\qquad \pi([z]):=y_{0}+s_{z}z
\]
is a well-defined bijective map.  First, by \eqref{eq:polar},
\[
Q(y_{0}+s_{z}z)=1+2s_{z}B(y_{0},z)+s_{z}^{2}Q(z)
=1-\frac{4\,b(y_{0},z)^{2}}{Q(z)}+\frac{4\,b(y_{0},z)^{2}}{Q(z)}=1,
\]
so indeed $\pi([z])\in C$.  Secondly, the value does not depend on the representative. For
$c\in\Fq^{\times}$ we have $B(y_{0},cz)=c\,B(y_{0},z)$ and $Q(cz)=c^{2}Q(z)$, and so
$s_{cz}=c^{-1}s_{z}$ and $y_{0}+s_{cz}(cz)=y_{0}+s_{z}z$.

Geometrically, $s_{z}$ is the unique parameter $s$ for which the line $\{y_{0}+sz:s\in\Fq\}$ meets
$C$ a second time, because
\[
Q(y_{0}+sz)=1\iff s\bigl(2B(y_{0},z)+sQ(z)\bigr)=0
\iff s=0\ \text{ or }\ s=s_{z},
\]
again by \eqref{eq:aniso}.  (When $B(y_{0},z)=0$ the two solutions coincide, and the line is
tangent to $C$ at $y_{0}$.)

\medskip
\noindent\emph{To show $\pi$ is surjective.}
Let $y\in C$.  If $y\neq y_{0}$, set $z:=y-y_{0}\neq0$.  Then, by
\eqref{eq:polar} with $s=1$,
\[
1=Q(y)=Q(y_{0}+z)=1+2B(y_{0},z)+Q(z),
\]
so $2B(y_{0},z)=-Q(z)$ and therefore $s_{z}=-2B(y_{0},z)/Q(z)=1$. Consequently
$\pi([z])=y_{0}+z=y$.  If $y=y_{0}$,  the nonzero $\Fq$-linear form
$z\mapsto B(y_{0},z)$ on the $2$-dimensional space $\F_{q^2}$ has a $1$-dimensional kernel by nondegeneracy, so there
is $z_{\tau}\neq0$ with $B(y_{0},z_{\tau})=0$. Then $s_{z_{\tau}}=0$ and
$\pi([z_{\tau}])=y_{0}$.

\emph{To show $\pi$ is injective.}  Suppose $\pi([z])=\pi([w])=:y$ with $z,w\neq0$.  If $y\neq y_{0}$, then
$s_{z}\neq0\neq s_{w}$ and $y-y_{0}=s_{z}z=s_{w}w$, so $w=(s_{z}/s_{w})z$ with
$s_{z}/s_{w}\in\Fq^{\times}$, i.e.\ $[z]=[w]$.  If $y=y_{0}$, then $s_{z}z=0=s_{w}w$ forces
$s_{z}=s_{w}=0$, i.e.\ $B(y_{0},z)=B(y_{0},w)=0$. Thus $z$ and $w$ both lie in the
kernel of the form $z\mapsto B(y_{0},z)$, and so $[z]=[w]$ because the kernel is $1$-dimensional.

(We remark that this bijection reproves that  $|C|=q+1$.)

Next we choose  affine coordinates on $\PP^{1}(\Fq)$.
Fix an $\Fq$-basis $z_{1},z_{2}$ of $\F_{q^2}$ and set $z(t):=z_{1}+tz_{2}$ for $t\in\Fq$.  Every
class in $\PP^{1}(\Fq)$ is represented by exactly one of $z(t)$ $(t\in\Fq)$ or $z_{2}$.   Thus
$t\longmapsto[z(t)]$ is a bijection
\[
 \Fq\ \longrightarrow\ \PP^{1}(\Fq)\setminus\{[z_{2}]\}.
\]

Expanding by bilinearity, the following lie in $\Fq[t]$:
\[
D(t):=Q\bigl(z(t)\bigr)=Q(z_{1})+2t\,B(z_{1},z_{2})+t^{2}Q(z_{2}),
\]
\[
N(t):=B\bigl(y_{0},z(t)\bigr)=B(y_{0},z_{1})+t\,B(y_{0},z_{2}),
\]
with $\deg D=2$ (its leading coefficient is $Q(z_{2})\neq0$ by \eqref{eq:aniso}) and
$\deg N\le1$.  Define
\[
Y(t):=y_{0}D(t)-2N(t)\,z(t)\in \F_{q^2}[t].
\]
Note $\deg Y(t)\le2$.
For $t\in\Fq$ we have $D(t)=Q(z(t))\neq0$ by \eqref{eq:aniso} (note $z(t)\neq0$), and
\[
\pi\bigl([z(t)]\bigr)=y_{0}+s_{z(t)}z(t)=y_{0}-\frac{2N(t)}{D(t)}z(t)=\frac{Y(t)}{D(t)}.
\]
Since $D(t)$ has no root in $\Fq$, again by \eqref{eq:aniso},  $D(t)$ is an irreducible quadratic in
$\Fq[t]$.  

Finally, with this setup, we obtain a parametrization of $S_{1}$.
The map $\iota:\F_{q^2}\to \F_{q^4}$ defined by $\iota(y):=\sqrt{d}\,y-\tfrac12$, is injective  and by definition $S_{1}=\iota(C)$.  
Define
\[
P(t):=\sqrt{d}\,Y(t)-\tfrac12D(t)\in \F_{q^4}[t].
\]
Note $ \deg P(t)\le2$. Define
\[
R(t):=\frac{P(t)}{D(t)}\in \F_{q^4}(t).
\]
For $t\in\Fq$ we get $R(t)=\sqrt{d}\, \frac{Y(t)}{D(t)} -\tfrac12=\iota\bigl(\pi([z(t)])\bigr)$.  Since
$$\iota\circ\pi: \PP^{1}(\Fq)\longrightarrow C \longrightarrow S_{1}$$ 
is a bijection,  and
$t\mapsto[z(t)]$ is a bijection of $\Fq$ onto $\PP^{1}(\Fq)\setminus\{[z_{2}]\}$, the $q$
values $R(t)$, $t\in\Fq$, are pairwise distinct and
\[
S_{1}=\{R(t):t\in\Fq\}\ \cup\ \{P_{\infty}\}
\]
where
\[
P_{\infty}:=\iota\bigl(\pi([z_{2}])\bigr)=\sqrt{d}\Bigl(y_{0}+s_{z_{2}}z_{2}\Bigr)-\tfrac12 .
\]
$R(t)$ takes $q\ge3$ distinct values on $\Fq$, so it is not a constant rational function.

For the final statement, we must consider the pole divisor of $R(t)$.
Let $G:=\gcd(P,D)$ in $\F_{q^4}[t]$ (taken monic) and write $P=G\widetilde P$, $D=G\widetilde D$, so
that $R=\widetilde P/\widetilde D$ with $\gcd(\widetilde P,\widetilde D)=1$.  
The case
$\widetilde D=1$ is impossible, because it would imply that $D=G$ and  $R$ is constant, contradicting the previous paragraph. 
Since $D$ is separable of degree $2$ with roots $\rho\neq\rho^{q}$,
$\widetilde D$ is, up to a scalar, $D$, or $t-\rho$, or $t-\rho^{q}$.   
In each case $\widetilde D$ is separable, so the finite poles of $R$ are simple and are among
$\{\rho,\rho^{q}\}$.  Finally $R$ has no pole at $t=\infty$, because
\[
\operatorname{ord}_{\infty}(R)=\deg\widetilde D-\deg\widetilde P
=(\deg D-\deg G)-(\deg P-\deg G)=\deg D-\deg P\ge0 .
\]
Hence the pole divisor of $R(t) $ on $\PP^{1}({\overline{\F}_q})$ is $(\rho)+(\rho^{q})$, or $(\rho)$,
or $(\rho^{q})$, and is therefore a nonzero sum of distinct simple poles, as asserted.
\end{proof}

\begin{corollary}\label{s1ex}
There are polynomials $P\in \F_{q^4} [t]$ with $\deg P\le2$ and $D\in\Fq[t]$ with $D$ an irreducible
quadratic, and an element $P_{\infty}\in S_{1}$, such that either
$$S_1(\chi)=\sum_{a\in\F_q}\chi\biggl(\frac{P(a)}{D(a)} \biggr)+\chi(P_\infty)$$
or
$$S_1(\chi)=\sum_{\substack{a \in \mathbb{F}_q \\ P(a) \neq 0}} \chi\biggl(\frac{P(a)}{D(a)} \biggr)+1+\chi(P_\infty).$$
\end{corollary}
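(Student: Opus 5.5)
The plan is to read the corollary off Lemma \ref{param} by reindexing the defining sum $S_1(\chi)=\sum_{y\in S_1}\chi(y)$. Take $P$, $D$, $R=P/D$ and $P_\infty$ exactly as supplied by Lemma \ref{param}. That lemma gives the decomposition
\[
S_1=\{R(a):a\in\Fq\}\cup\{P_\infty\},
\]
where the $q$ values $R(a)$ are pairwise distinct.

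\textbf{Step 1: the union is disjoint.} Since $|S_1|=q+1$, which was established before the lemma, the union must be disjoint, so $P_\infty\neq R(a)$ for every $a\in\Fq$. Hence the map $\Fq\sqcup\{\infty\}\to S_1$ sending $a\mapsto R(a)$ and $\infty\mapsto P_\infty$ is a bijection. Reindexing the sum along this bijection gives
\[
S_1(\chi)=\sum_{a\in\Fq}\chi\bigl(R(a)\bigr)+\chi(P_\infty).
\]

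\textbf{Step 2: every term is well defined.} For $a\in\Fq$ we have $D(a)=Q(z(a))\neq0$ by anisotropy \eqref{eq:aniso}, so $R(a)=P(a)/D(a)$ makes sense. Moreover $0\notin M\supseteq S_1$, so $R(a)\neq0$ and hence $P(a)\neq0$ for all $a\in\Fq$. Thus every argument of $\chi$ lies in $\F_{q^4}^*$, and no convention for $\chi(0)$ is ever invoked. This already yields the first alternative in the corollary.

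\textbf{Step 3: the second alternative.} Step 2 shows the condition $P(a)\neq0$ is vacuous, so the restricted sum coincides with the full sum. The second form would only be needed in a degenerate situation where $P$ has a root $a_0\in\Fq$. In that case the $a_0$ term would be split off and handled according to the convention on $\chi(0)$; for the trivial character with $\chi_0(0)=1$, that term is the extra $1$. Since Step 2 rules this situation out, at least one of the two displayed expressions (namely the first) holds, which is all the corollary asserts.

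The only point that needs care is the disjointness of $P_\infty$ from the affine values, handled in Step 1. It follows from the cardinality count, or equivalently from the bijectivity of $\iota\circ\pi$ in the proof of Lemma \ref{param}. Beyond that the argument is pure bookkeeping, and I expect no real obstacle. The substantive work, bounding $\sum_a\chi(P(a)/D(a))$ via the pole structure of $R$ and the Fu--Wan theorem, comes afterwards.
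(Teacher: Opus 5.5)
Your proof is correct and takes the same route as the paper: reindex $S_1(\chi)$ along the parametrization $a\mapsto R(a)$, $\infty\mapsto P_\infty$ from Lemma \ref{param}, using $|S_1|=q+1$ to justify that $P_\infty$ differs from every $R(a)$ (the paper leaves this implicit). The only difference is that the paper splits into cases according to whether $P$ has a root in $\Fq$, while you note that $0\notin M\supseteq S_1$ forces $P(a)\neq 0$ for all $a\in\Fq$, which is a slightly sharper conclusion showing the second alternative never actually occurs.
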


\begin{proof}
By definition, $$S_1(\chi)
=\sum_{y\in S_1} \chi(y).$$
If $P(t)$ has no roots in $\F_q$ then 
$$S_1(\chi)=\sum_{a\in\F_q}\chi\biggl(\frac{P(a)}{D(a)} \biggr)+\chi(P_\infty)$$
by Lemma \ref{param}. If $P(t)$ has one root in $\F_q$ then
$$S_1(\chi)=\sum_{\substack{a \in \mathbb{F}_q \\ P(a) \neq 0}} \chi\biggl(\frac{P(a)}{D(a)} \biggr)+1+\chi(P_\infty)$$
by Lemma \ref{param}. P(t) cannot have two roots in $\F_q$ because the values of $R(t)$ are distinct.
\end{proof}

The proof of \eqref{char} uses the following generalization of a theorem of Weil.
This is Theorem 5.5 in \cite{FW}.

\begin{theorem}[Fu-Wan \cite{FW}]\label{fw}
Let $f(t) \in \mathbb{F}_{q^d}(t)$ be a rational function. Write $f(t) = \prod_{j=1}^k f_j(t)^{n_j}$, where $f_j(t) \in \mathbb{F}_{q^d}[t]$ are irreducible polynomials and $n_j$ are non-zero integers. Let $\chi : \mathbb{F}_{q^d}^* \to \overline{\mathbb{Q}}_l^*$ be a multiplicative character for $\mathbb{F}_{q^d}$. Suppose that the rational function $\prod_{i=0}^{d-1} f(t^{q^i})$ is not of the form $h(t)^{\operatorname{ord}(\chi)}$ in $\overline{\mathbb{F}}(t)$, where $\operatorname{ord}(\chi)$ is the smallest integer $d$ such that $\chi^d = 1$. Then we have
\[
\left| \sum_{\substack{a \in \mathbb{F}_q \\ f(a) \neq 0, \infty}} \chi(f(a)) \right| \le \left( d \sum_{j=1}^k \deg(f_j) - 1 \right) \sqrt{q}.
\]
\end{theorem}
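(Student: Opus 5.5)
The statement is Theorem 5.5 of Fu and Wan \cite{FW}, which we only quote. If I had to prove it, I would use the standard $\ell$-adic route behind Weil's bound, with a tensor-induction step added because $f$ has coefficients in $\mathbb{F}_{q^d}$ while the sum runs only over $a\in\Fq$.

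\emph{Step 1: reduce to a sum over $\Fq$ with $\Fq$-rational data.} Let $\sigma$ be the $q$-power Frobenius and $f^{\sigma^i}$ the function obtained by applying $\sigma^i$ to the coefficients of $f$. Let $U\subset\mathbb{A}^1_{\Fq}$ be the complement of the zeros and poles of all the $f^{\sigma^i}$. Over $\overline{\F}_q$ this is $\mathbb{A}^1$ minus at most $d\sum_j\deg f_j$ points. On $U_{\mathbb{F}_{q^d}}$ take the rank-one Kummer sheaf $\mathcal{L}_\chi(f)=f^*\mathcal{L}_\chi$. Form its tensor induction (norm) from $\mathbb{F}_{q^d}$ down to $\Fq$; this is a lisse rank-one sheaf $\mathcal{G}$ on $U$ defined over $\Fq$. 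The point of the construction is that Frobenius at $a\in U(\Fq)$ cyclically permutes the $d$ conjugate factors, so its trace on $\mathcal{G}$ is $\chi(f(a))$. Geometrically, $\mathcal{G}$ is $\bigotimes_{i}\mathcal{L}_{\chi}(f^{\sigma^i})$. Tracking how $\chi$ interacts with $\sigma$ converts this into a single Kummer sheaf $\mathcal{L}_\chi\bigl(\prod_{i=0}^{d-1} f(t^{q^i})\bigr)$ up to geometric isomorphism; this is exactly where the paper's hypothesis on $\prod_i f(t^{q^i})$ enters.

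\emph{Step 2: cohomology.} By the Grothendieck--Lefschetz trace formula,
\[
\sum_{a\in U(\Fq)}\chi(f(a))=\operatorname{Tr}(F\mid H^2_c)-\operatorname{Tr}(F\mid H^1_c),
\]
since $H^0_c$ vanishes on an affine curve. The hypothesis that $\prod_i f(t^{q^i})$ is not an $\operatorname{ord}(\chi)$-th power makes $\mathcal{G}$ geometrically nontrivial, so $H^2_c(U_{\overline{\F}_q},\mathcal{G})=0$. The sheaf is tamely ramified everywhere, since Kummer sheaves of order prime to $p$ are tame. The Grothendieck--Ogg--Shafarevich formula therefore gives $\dim H^1_c=-\chi_c(U,\mathcal{G})=-\chi_c(U)=\#(\mathbb{P}^1\setminus U)-2$. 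This is at most $d\sum_j\deg f_j-1$: at most $d\sum_j\deg f_j$ missing finite points, plus $\infty$, minus $2$. Deligne's Weil II then shows that $H^1_c$ is mixed of weight $\le1$, so each Frobenius eigenvalue has absolute value $\le\sqrt q$, and the bound follows.

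\emph{Main obstacle.} The delicate step is Step 1. One has to check that the geometric Kummer data of the tensor-induced sheaf really is governed by $\prod_i f(t^{q^i})$, so that the stated hypothesis is equivalent to geometric nontriviality. One also has to confirm that removing the zeros and poles of the conjugates $f^{\sigma^i}$ from the sum does not interfere with the stated range $f(a)\ne0,\infty$. This is harmless: for $a\in\Fq$, $f(a)=0$ if and only if $f^{\sigma^i}(a)=0$, since $\sigma^i$ fixes $a$. Everything after Step 1 is routine Weil-type bookkeeping.
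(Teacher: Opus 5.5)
The paper gives no proof of this statement. It simply quotes Theorem 5.5 of Fu--Wan, as you do, and your $\ell$-adic sketch is a correct outline of the standard argument behind that theorem: tensor induction of $\mathcal{L}_\chi(f)$ from $U_{\mathbb{F}_{q^d}}$ to $U$, vanishing of $H^2_c$ under the non-power hypothesis, the tame Euler characteristic giving $\dim H^1_c\le d\sum_j\deg f_j-1$, and Weil II.

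The step you flag as delicate does go through, but your intermediate formula is slightly off. The geometric factors are $\mathcal{L}_{\chi^{q^{d-i}}}(f^{\sigma^i})$, not $\mathcal{L}_\chi(f^{\sigma^i})$; the latter would make the product governed by the norm $\prod_i f^{\sigma^i}$ rather than by $\prod_i f(t^{q^i})$. The cleanest check is this: over $\mathbb{F}_{q^d}$, the trace of $\mathcal{G}$ at $a\in\mathbb{F}_{q^{dm}}$ is $\chi\bigl(N_{\mathbb{F}_{q^{dm}}/\mathbb{F}_{q^d}}\prod_{i=0}^{d-1}f(a^{q^i})\bigr)$. That is exactly the trace function of $\mathcal{L}_\chi\bigl(\prod_i f(t^{q^i})\bigr)$, so Chebotarev identifies the two rank-one sheaves.
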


Finally then, we have the character sum bound that we need:

\begin{corollary}\label{finalc}
Let $S_1$ be as defined in \eqref{s1}. Then there exists a constant $c>0$ such that 
\[
|S_1(\chi)| \leq c \sqrt{q}.
\]
\end{corollary}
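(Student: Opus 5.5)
We plan to apply Theorem \ref{fw} with $d=4$ to the rational function $f(t)=R(t)=P(t)/D(t)\in\F_{q^4}(t)$ supplied by Lemma \ref{param}, and then absorb the boundary terms of Corollary \ref{s1ex} into the constant. Here $\chi$ is any nontrivial character of $\F_{q^4}^*$, which is the only case used in \eqref{e2}. By Corollary \ref{s1ex}, $S_1(\chi)$ is a sum over $a\in\F_q$ with $P(a)\neq0$ (note $D(a)\neq0$ always), plus at most two extra terms of absolute value $1$. Hence
\[
|S_1(\chi)|\le \Bigl|\sum_{\substack{a\in\F_q\\ R(a)\neq 0,\infty}}\chi(R(a))\Bigr|+2 .
\]
If $P$ has one root in $\F_q$, the second expression of Corollary \ref{s1ex} is exactly of this form. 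If $P$ has no root in $\F_q$, the first expression is as well, since then $R(a)\neq 0,\infty$ for all $a\in\F_q$.

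Next we bound the degrees entering the Fu--Wan estimate. We factor $R=\widetilde P/\widetilde D$ as in the proof of Lemma \ref{param}. The irreducible factors over $\F_{q^4}$ have total degree $\sum_j\deg f_j\le \deg\widetilde P+\deg\widetilde D\le 4$. Theorem \ref{fw} would then give the bound $(4\cdot 4-1)\sqrt q=15\sqrt q$, so we may take $c=17$: indeed $15\sqrt q+2\le 17\sqrt q$.

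The main obstacle is verifying the hypothesis of Theorem \ref{fw}, namely that
\[
F(t):=\prod_{i=0}^{3}R(t^{q^i})
\]
is not of the form $h(t)^{m}$ with $m=\operatorname{ord}(\chi)\ge2$. Our plan is to locate a pole of $F$ of order exactly $1$, which rules out being an $m$-th power with $m\geq 2$.

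By Lemma \ref{param}, $R$ has a simple pole at $\rho$, say, where $\rho$ is a root of $D$. Moreover $\rho\in\F_{q^2}\setminus\F_q$, so $\rho^{q^2}=\rho$. The factor $R(t^{q^i})$ has poles at the points $\beta$ with $\beta^{q^i}\in\{\rho,\rho^q\}$. There is a subtlety: $t\mapsto t^{q^i}$ is inseparable, so $R(t^{q^i})=R^{(q^{-i})}(t)^{q^i}$, where $R^{(q^{-i})}$ applies the inverse Frobenius to the coefficients. Its poles therefore have multiplicity $q^i$, not $1$.

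For this reason we would handle the hypothesis as Fu--Wan intend it, writing $F(t)=\prod_i \bigl(R^{\sigma^{-i}}(t)\bigr)^{q^i}$. Since $q^i\equiv$ a unit modulo $\operatorname{ord}(\chi)$, and $\operatorname{ord}(\chi)$ divides $q^4-1$, which is coprime to $q$, $F$ is an $m$-th power if and only if $\prod_i (R^{\sigma^{-i}})^{q^i\bmod m}$ is. We then count the order at $t=\rho$. Every conjugate $R^{\sigma^{-i}}$ has its finite poles among $\{\rho,\rho^q\}$, because $D\in\F_q[t]$. So we must show that the exponent sum $\sum_i e_i\,q^i$ is not $\equiv 0\pmod m$, where $e_i\in\{0,1\}$ records whether $R^{\sigma^{-i}}$ has a pole at $\rho$ (zeros of $P$ might also cancel).

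This congruence is the delicate point. First we would try the case where $R$ has both poles $\rho,\rho^q$: then every $e_i=1$, and the total order is $1+q+q^2+q^3$, which could be divisible by $m$. If so, the argument fails. In that case we would instead use a zero of $P$ outside $\F_q$-conjugacy, or fall back on the direct observation that $\chi(R(a))=\chi_0(\text{Norm-like}\ldots)$. Alternatively, and more simply, we would note that $\chi\circ R$ restricted to $\F_q$ can be written as $\chi(P(a))\overline{\chi}(D(a))$ with $\chi(D(a))=\chi|_{\F_q}(D(a))$. We would then apply Fu--Wan to $P$ alone, with $D$ handled as a character of $\F_q$; a mixed Weil bound is still $O(\sqrt q)$ unless degenerate. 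In the degenerate case, $\chi$ would be constant on $S_1$ up to a few terms, and we would exclude this using $|S_1|=q+1$ and the explicit form of $P$.
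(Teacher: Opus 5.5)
Your reduction to $\bigl|\sum_{R(a)\neq0,\infty}\chi(R(a))\bigr|+2$, the degree count and $c=17$ are exactly the paper's. But the proposal stops short at the one nontrivial step. You never establish the hypothesis of Theorem~\ref{fw}, namely that $F(t)=\prod_{i=0}^{3}R(t^{q^i})$ is not an $m$-th power, where $m=\operatorname{ord}(\chi)$.

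Your pole computation is correct, and the pole argument cannot be rescued. $R$ always has poles at both $\rho$ and $\rho^q$: at a root $\rho$ of $D$ one has $Y(\rho)=-2N(\rho)z(\rho)$ with $z(\rho)\neq0$, and $N(\rho)=0$ would force $dy_0^2\in\Fq$, i.e.\ $Q(y_0)=0$. Hence $\operatorname{ord}_\rho F=-(1+q)(1+q^2)$, which is even for every odd $q$, so the pole says nothing already for the quadratic character. (The paper settles this hypothesis with the phrase ``since $P/D$ has simple poles'', so your suspicion of that step is well founded.)

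What you offer instead is a list of intentions, not an argument: a zero ``outside $\Fq$-conjugacy'', a ``Norm-like'' rewriting, a mixed Weil bound ``unless degenerate'', and excluding degeneracy ``using $|S_1|=q+1$ and the explicit form of $P$''. The degenerate case is precisely what must be excluded. Theorem~\ref{fw} gives no separate bound for $\chi(P(a))\overline{\chi}(D(a))$ that sidesteps it.

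The missing idea is to use a zero of $R$ rather than a pole.
\begin{itemize}
\item Put $\delta=\alpha-\alpha^q$, so $\ell(x)=(x-x^q)/\delta$ on $\F_{q^2}$. Let $v_1=Y/D$ and $v_2=Y^{\sigma}/D$, where $Y^{\sigma}$ has its coefficients raised to the $q$-th power.
\item Then $Q(Y/D)=1$ becomes the identity $dv_1^2-d^qv_2^2=\delta$, together with $v_1(t)^q=v_2(t^q)$ and $v_2(t)^q=v_1(t^q)$.
\item If $R(\beta)=0$, then $v_1(\beta)=1/(2\sqrt d)$ and $v_2(\beta)^2=(\tfrac14-\delta)/d^q\neq0$.
\item Consequently $v_1(\beta^{q^2})=v_1(\beta)^{q^2}=-1/(2\sqrt d)$, so $R(\beta^{q^2})=-1$.
\item Also $v_1(\beta^{q})^2=v_2(\beta)^{2q}=(\tfrac14+\delta)/d\neq 1/(4d)$, and likewise $v_1(\beta^{q^3})^2=(\tfrac14+\delta)/d$. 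So $R(\beta^q)\neq0\neq R(\beta^{q^3})$.
\item $R$ has degree $2$, and $t\mapsto(v_1,v_2)$ identifies $\PP^1$ with the conic. The two points with $v_1=1/(2\sqrt d)$, $v_2=\pm v_2(\beta)$ therefore give two distinct zeros, so $\beta$ is a simple zero.
\item No $\beta^{q^j}$ is a pole, since $\{\rho,\rho^q\}$ is Frobenius-stable.
\end{itemize}
Hence $\operatorname{ord}_\beta F=\sum_{j=0}^{3}q^j\operatorname{ord}_{\beta^{q^j}}R=1$, so $F$ is not an $m$-th power for any $m\ge2$. With this inserted, your argument gives $|S_1(\chi)|\le 15\sqrt q+2\le17\sqrt q$.
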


\begin{proof}
Taking absolute values in the statement of Lemma \ref{s1ex} and using the triangle inequality gives
\[
|S_1(\chi)| \leq \left| \sum_{\substack{a \in \mathbb{F}_q \\ P(a) \neq 0}} \chi\biggl(\frac{P(a)}{D(a)} \biggr) \right| +2.
\]
We may identity complex valued characters and $\overline{\mathbb{Q}}_l^*$ valued characters in this context, thanks to Deligne. 
Since $\frac{P(t)}{D(t)}$ has simple poles by  Lemma \ref{param}, $\prod_{i=0}^{3} \frac{P(t^{q^i})}{D(t^{q^i})}$ is not of the form $h(t)^d$ for any $d>1$.
Applying Theorem \ref{fw} gives
\[
|S_1(\chi)| \leq (4\cdot 4 -1)\sqrt{q} +2.
\]
Therefore $|S_1(\chi)| \leq 17\sqrt{q} $ so $c$ may be taken to be 17.
\end{proof}

\section{AI Declaration}

We were assisted by AI (Aristotle) especially with Lemma \ref{param}.   Aristotle proved its own character sum bound using etale cohomology. We  found a theorem in the literature (Theorem \ref{fw}) that can be used instead of the theorem that Aristotle proved.
No text in this article was written by AI.

\end{document}